\tikzset{
block/.style = {draw, rectangle,
	minimum height=0.5cm,
	minimum width=1cm},
input/.style = {coordinate,node distance=1cm},
output/.style = {coordinate,node distance=4cm},
arrow/.style={draw, -latex,node distance=2cm},
pinstyle/.style = {pin edge={latex-, black,node distance=2cm}},
sum/.style = {draw, circle, node distance=1cm},
}
\DeclareMathOperator*{\argmin}{arg\,min}
\newcommand{\norm}[1]{\ensuremath{\| #1 \|}}
\let\leq\leqslant
\let\geq\geqslant
\newcommand{\R}{\mathbb R}
\newcommand{\N}{\mathbb N}
\newcommand{\calC}{\ensuremath{\mathcal{C}}}
\newcounter{todocounter}
\newtheorem{theorem}{Theorem}[section]
\newtheorem{lemma}[theorem]{Lemma}
\theoremstyle{remark}
\newtheorem{remark}[theorem]{Remark}
\theoremstyle{definition}
\newtheorem{assumption}{Assumption}
\newtheorem{definition}[theorem]{Definition}
\newtheorem{example}{Example}
\title{On analysis of open optimization algorithms}
\author{Jaap Eising \& Florian D\"{o}rfler \thanks{The authors are with the Automatic Control Laboratory (IfA), ETH Z\"{u}rich, Switzerland (\{jeising, doerfler\}@control.ee.ethz.ch). This work is supported by the SNF/FW Weave Project 200021E\_20397}}
\begin{document}

\maketitle

\begin{abstract}
	We consider optimization algorithms that are open systems, that is, with external inputs and outputs. Such algorithms arise for instance, when analyzing the effect of noise or disturbance on an algorithm, or when an algorithm is part of control loop without timescale separation. Bridging between monotone operator theory and energy-based modeling, we consider analysis results in the form of incremental dissipativity certificates, yielding tests in the form of linear matrix inequalities. To be precise, we consider robustness in terms of incremental small gain, and composition results for optimization algorithms operating in closed loop.
\end{abstract}

\section{Introduction} 

Instead of designing controllers in closed-form, many modern implementations of controllers take the form of feedback from explicit optimization problems. While providing a full overview is impossible, some particularly popular methods with optimization explicitly in the control loop are Model Predictive Control \cite{JBR-DQM:09}, its data-based variants \cite{JC-JL-FD:21}, Online Feedback Optimization \cite{OFO20,HAUSWIRTH2024100941}, and safety filters \cite{Ames17}. Accordingly, the aforementioned papers point to a rich literature regarding the analysis of closed loops between plants and optimization-based controllers. 

In many applications, fully resolving the optimization problem at the same frequency as the plant is impossible. A common approach is to consider an iterative optimization algorithm which is guaranteed to converge to the optimal value, iterate for one or more steps, and feed the resulting suboptimal value to the plant. Assuming that for each time instance of the plant the optimization algorithm can converge to within a pre-specified distance to the optimal value, can be combined with robustness analysis to yield closed loop guarantees. 

The question arises which algorithm to use. This work stands within the context of system theory for algorithms \cite{STOA24}, that is, viewing algorithms as dynamical systems, unifying ideas from both. In this, designing an optimization algorithm boils down to designing an autonomous system which converges to the optimal control input. In practice, implementation of algorithms is imprecise or noisy, especially on embedded platforms and subject to real-time computing at the sampling rate of the system. Taking the system-theoretic view, this can be modeled as a disturbance acting on the algorithm, and foundational to robust control is the trade-off between robustness and stability. Indeed, convergence and robustness of optimization algorithms exhibit the same trade-off \cite{BvS-LL:21}. Hence, in such cases, simply picking the fastest-converging optimization algorithm can have unforeseen side effects or, in the worst case, destabilize. 

More broadly, we are interested in situations with external signals that can not necessarily be modeled as disturbances.  In order to address these issues, we propose to investigate the analysis of \textit{open} optimization algorithms, that is, optimization algorithms which are not autonomous, but have remaining inputs and outputs. Since, for closed algorithms, the natural analysis concept is contraction theory, we consider its open equivalent of incremental dissipativity in this paper. 

\subsection*{Literature review} 
The analysis of optimization algorithms as dynamical systems is a topic with a lot of history, see e.g \cite{helmke2012optimization} and the references therein. More specifically, the use of dissipativity theory (\cite{JCW:72a,JCW:72b}) for such analysis is discussed in\cite{Lessard-22}. Essentially this method takes the following approach. We decompose an algorithm into a linear system which is in interconnection with a \textit{sector bounded} oracle, in the simplest case taking the form of a gradient evaluation. Then we can view the convergence of the algorithm as a variant on the classical ``problem of absolute stability'' or Lur'e problem \cite{lurie1944theory}. 

Beyond the use of this classical notion of dissipativity, a number of related concepts are applied to derive different analysis results for optimization algorithms. The paper \cite{SP-19} uses \textit{equilibrium independent dissipativity}, to avoid the requirement of having (for analysis only) the optimal point at the origin. On the other hand, integral quadratic constraints, such as in \cite{Lessard16,Scherer-21,scherer23} can be used to guarantee similar conditions without constructing a storage function.

One variant on dissipativity theory arose based on the fact that many nonlinear systems seem better suited to incremental analysis than to stability analysis \cite{Zames60,LOHMILLER1998683}. Contraction theory (see e.g. \cite{FB-CTDS}) deals with increments (differences) of trajectories instead of with trajectories directly. In this sense, it can be viewed as the \textit{incremental} form of Lyapunov theory. Similar to how dissipativity theory is the generalization of Lyapunov theory to open systems, incremental dissipativity generalizes contraction in the same way \cite{SEPULCHRE2022290}. Of particular relevance to this work are the clear links between incremental dissipativity and monotone operator theory \cite{bauschke2011, ryu2016primer}, the cornerstone of modern algorithm analysis.


\subsection*{Contribution}

We investigate incremental dissipativity as a tool to analyze optimization algorithms which can be written as the interconnection of a linear system and a set of oracles. In this work
\begin{enumerate}
	\item we introduce a class of \textit{open} optimization algorithms and show its relevance;
	\item we derive linear matrix inequality-based test for robustness analysis of open optimization algorithms, yielding incremental small gain guarantees with respect to the propagation of noise or disturbances; 
	\item we provide results allowing us to analyze closed-loop performance of optimization-in-the-loop, that is, the direct interconnection of an algorithm and a plant.
\end{enumerate}

\subsection*{Notation}
We denote by $\N$ and $\R$ the sets of non-negative
integer and real numbers, respectively. We let $\R^{n\times m}$
denote the space of $n\times m$ real matrices. For any
$x\in\R^n$, $\Vert x\Vert_2$ denotes the standard 2-norm. 
If $P=P^\top\in\R^{n\times n}$, then $P\succeq 0$ (resp. $P\succ 0$) denotes
that $P$ is positive semi-definite (resp. definite). A set $\calC\subseteq\mathbb{R}^n$ is convex if $\alpha x +(1-\alpha) y\in \calC$ for $x,y\in\calC$ and $0\leq\alpha\leq 1$. Let $\mu>0$, a function $c:\calC\to \mathbb{R}$ is called $\mu$-strongly convex if, for all $x_1,x_2\in\calC$ and $0\leq\alpha\leq 1$, we have
\begin{align*}
	f(\alpha x_1 + (1-\alpha) x_2) & \leq \alpha f(x_1) + (1-\alpha) f(x_2)\\
	& \quad -\alpha(1-\alpha)\frac{\mu}{2} \norm{x_1-x_2}_2^2.
\end{align*}
The function $c$ is convex if the latter holds with $\mu=0$.

\section{Open optimization algorithms} 

In line with \cite{STOA24} we consider optimization algorithms that can written as a dynamical systems. More specifically, we consider algorithms as in \cite{Lessard-22}, that is, those decomposed into linear dynamic system and a set of \textit{oracles}, such as gradient evaluations or projections. In this paper, we treat the oracles as given, and outside of our consideration. To motivate this, consider the following example:
\begin{example}[Nesterov acceleration] 
	\label{ex:closed loop}
	Consider the problem of minimizing a function $c:\mathbb{R}^q\rightarrow \mathbb{R}$. Assume that $c$ is continuously differentiable and that we have access to an oracle which calculates the gradient $\nabla c$. A popular method of resolving this problem is Nesterov's accelerated method \cite{Nesterov-book}. As shown in \cite{Lessard-22}, we can write method this as a linear system: 
	\begin{align*} 
		x_{k+1} &= \begin{bmatrix} 1+\beta & -\beta \\ 1 &0 \end{bmatrix} x_k + \begin{bmatrix} -\eta \\ 0 \end{bmatrix} u_k, \\
		y_k &= \begin{bmatrix}1+\beta & -\beta\end{bmatrix}x_k,
	\end{align*}
	in interconnection with the $ u_k = \nabla c(y_k)$. \hfill $\blacksquare$
\end{example}

In a similar fashion, we can address methods like projected gradient descent and the Alternating Direction Method of Multipliers (ADMM). For this, we require different oracles, beyond gradient operations. Some particularly relevant choices are subgradients, the Euclidean projection onto a (convex) set $C$,
\[ \Pi_C (y) = \argmin_{u\in C} \norm{u-y}^2_2, \] 
or the \emph{proximal operator} of $c$,
\[ \textup{prox}_c (y) = \argmin_{u} c(u)+\tfrac{1}{2} \norm{u-y}^2_2. \]

In contrast with the autonomous, or \textit{closed}, optimization algorithms above, in this paper we are interested in optimization algorithms with additional inputs and outputs. To be precise, optimization algorithms of the form 
\begin{subequations}\label{eq:linear system}
	\begin{align}  
		x_{k+1} &= A\hspace{4pt}x_k+B_1\hspace{4pt}u_k+B_2\hspace{4pt}d_k, \\
		y_k &= C_1x_k+D_{11}u_k +D_{12}d_k, \\
		z_k &= C_2x_k+D_{21}u_k +D_{22}d_k.
	\end{align}
\end{subequations}
Here, $x \in \mathbb{R}^n$ is the \textit{state} of the algorithm, $u\in\mathbb{R}^{m_u}$ is the \textit{oracle input}, $y\in \mathbb{R}^{p_y}$ is the \textit{oracle output}, $d_k\in\mathbb{R}^{m_d}$ is the \textit{external input}, and $z_k\in\mathbb{R}^{p_z}$ is the \textit{external output}. As indicated by the naming above, we also consider \textit{oracles} of the form:
\begin{equation} 
	\label{eq:general oracles} u_k = \phi(y_k).
\end{equation}

\begin{figure}
	\begin{center}
	\begin{tikzpicture}[auto, node distance=1cm,>=latex']
		\node [block, name=plant,minimum height=1.2cm,minimum width=2.4cm] (plant) {$\Sigma$};
		\node [block, left= 1 of plant ,minimum height=1.2cm,minimum width=2.4cm] (K1) {$u = \varphi(y)$};
		
		\draw [draw,->] ($(plant.east)+(0,0.2)$) -- node {$z$} ($(plant.east)+(1,0.2)$);
		\draw [draw,->] ($(plant.east)+(1,-0.2)$) -- node {$d$} ($(plant.east)+(0,-0.2)$);
		\draw [draw,->] ($(plant.west)-(0,0.2)$) -- node {$y$} ($(plant.west)-(1,0.2)$);
		\draw [draw,->] ($(plant.west)-(1,-0.2)$) -- node {$u$} ($(plant.west)-(0,-0.2)$);
	\end{tikzpicture}
	\end{center}
	\caption{A schematic view of an open optimization algorithm. Here, $\Sigma$ denotes the linear system \eqref{eq:linear system}. } \label{fig:open opt}
\end{figure}

We consider \textit{open optimization algorithms} consisting of a system \eqref{eq:linear system} interconnected with \eqref{eq:general oracles} (cf. Figure~\ref{fig:open opt}). This means that the algorithm still has external inputs $d_k$, such as disturbances, and outputs $z_k$, such as measurements for a performance metric.

More specifically, we consider the analysis of open optimization algorithms. To derive guarantees, it is clearly insufficient to investigate just the internal convergence rates of the algorithm. In the language of control design, instead of considering internal stability of a closed system, we need to consider the input-output behavior of an \emph{open} system. In this paper, we will employ \textit{incremental dissipativity theory} as the natural generalization of contraction analysis for the analysis of input-output behavior of class of optimization algorithms. In particular, we will aim at analyzing algorithms with \textit{robustness} and \textit{closed loop behavior} in mind. 

\section{Examples of open optimization algorithms}
Before starting with the analysis, we will show how a number of relevant problems take the form described. 

\subsection{Closed-loop optimization}
%
%
Consider a discrete time plant 
\begin{equation}\label{eq:plant} \xi_{k+1} = f(\xi_k,\nu_k), \quad  \zeta_k = g(\xi_k,\nu_k),\end{equation}
where the state signal $\xi_k\in\mathbb{R}^n$, the input $\nu_k\in \mathbb{R}^m$, and the output $y_k\in \mathbb{R}^p$, and hence the functions $f:\mathbb{R}^n\times \mathbb{R}^m \rightarrow \mathbb{R}^n$ and $g:\mathbb{R}^n\times \mathbb{R}^m \rightarrow \mathbb{R}^p$. Now consider a controller of the form:
\begin{equation}\label{eq:opt cont} \nu_k = \argmin_{\nu} c(\nu,\zeta_k), \end{equation}
where $c: \mathbb{R}^m\times \mathbb{R}^p\rightarrow \mathbb{R}$ denotes a \textit{cost function}. Controllers of this form mainly, but not exclusively, arise in the form of Model Predictive Control (MPC). Instead of investigating the idealized closed loop of \eqref{eq:plant} and \eqref{eq:opt cont}, we will consider the situation where an iterative optimization algorithm is used to solve \eqref{eq:opt cont}, which runs in the control loop with the same sampling rate. The following example will illustrate some of the arising difficulties.  

\begin{example}[Illustrative example] 
	Consider the closed loop between a linear scalar system and a controller optimizing a single-stage performance index, given by: 
	\[\xi_{k+1} = -\xi_k + \nu_k, \quad \nu_k =\argmin_\nu \tfrac{1}{2} \norm{\nu- K\xi_k}^2.\] 
	Clearly, the closed loop is asymptotically stable if and only if $0<K<2$. 
	Now we replace the optimization algorithm with a simple gradient descent scheme:
	\[\nu_{k+1} = \nu_k - \eta (\nu_k -K\xi_k)= (1-\eta)\nu_k + \eta K\xi_k.\]
	When $\xi_k$ is considered to be constant, this algorithm is stable for $0<\eta<2$. 
	However, writing the dynamics of the closed loop, we obtain 
	\[ \begin{bmatrix} \xi_{k+1} \\ \nu_{k+1} \end{bmatrix}= \begin{bmatrix} -1 & 1 \\ \eta K & 1-\eta\end{bmatrix} \begin{bmatrix} \xi_k \\ \nu_k \end{bmatrix}.\]
	Simple calculation reveals that the closed loop is unstable for \textit{any} pair $0<K,\eta<2$. On the other hand, for $K=3$ and $\eta=-\tfrac{1}{2}$, we do obtain a stable closed loop, despite the fact that neither the idealized closed loop, nor the algorithm is contracting.  \hfill $\blacksquare$
\end{example} 
This motivates our first problem: Given an open algorithm of the form \eqref{eq:linear system} interconnected with the oracle \eqref{eq:general oracles} and a plant of the form \eqref{eq:plant}, provide conditions on the plant and the algorithm under which the closed loop using the equalities $\zeta_k =d_k$ and $\nu_k= z_k$ is contractive.

\subsection{Robustness of algorithms} 
A second problem considers disturbances on optimization algorithms.

\begin{example}[Open Nesterov with gradient noise] 
	\label{ex:open nesterov 1}
	Consider again Example~\ref{ex:closed loop}. Now assume that we do not have access to perfect measurements of the gradient, but that these are corrupted by noise or other uncertainties, that is, $\bar{u}_k = d_k + \nabla c(y_k)$. We can write:
	\begin{align*} 
		x_{k+1} &= \begin{bmatrix} 1+\beta & -\beta \\ 1 &0 \end{bmatrix} x_k + \begin{bmatrix} -\eta \\ 0 \end{bmatrix} u_k + \begin{bmatrix} -\eta \\ 0 \end{bmatrix} d_k , \\
		z_k &= y_k = \begin{bmatrix}1+\beta & -\beta\end{bmatrix}x_k,
	\end{align*}
	in interconnection with the oracle $u_k = \nabla c(y_k)$.  \hfill $\blacksquare$
\end{example}


Note that this is an open optimization algorithm of the form \eqref{eq:linear system} interconnected with the oracle \eqref{eq:general oracles}. If $d_k=0$ there is no noise, and analysis of the algorithm can proceed by known methods. However, as with dynamical systems, the noiseless behavior does not provide guarantees in the presence of noise. Hence, the second problem is to analyze the robustness trade-off of open optimization algorithms to noise.

%
%
%

\section{Technical preliminaries}\label{sec:tech}
As a first step towards the analysis of open optimization algorithms, we will consider relevant properties of the oracles. 

\subsection{Oracle properties} \label{ssec:oracles}

Let $\varphi:\mathbb{R}^q\rightarrow\mathbb{R}^p$, then the function $\varphi$ is said to be $L$ \emph{Lipschitz} if:
\[ \norm{\varphi(x_1)-\varphi(x_2)}^2 \leq L^2 \norm{x_1-x_2}^2,\] 
for all $x_1,x_2\in\mathbb{R}^n$. If additionally $p=q$, then we say $\varphi$ is 
\begin{itemize}
	\item $\mu-$\emph{strongly monotone} if
	\[ (\varphi(x_1)-\varphi (x_2))^\top(x_1-x_2)\geq \mu \norm{x_1-x_2}_2^2, \]
	\item \emph{firmly nonexpansive} if
	\[ (\varphi(x_1)-\varphi(x_2))^\top(x_1-x_2) \geq \norm{\varphi(x_1)-\varphi(x_2)}_2^2,\]
\end{itemize}
for all $x_1,x_2\in\mathbb{R}^n$. 

The following lemma links properties of commonly used oracles to assumptions on their related optimization problem. 
\begin{lemma} Let $c:\mathbb{R}^n\rightarrow \mathbb{R}$.
	\begin{enumerate}
		\item If the function $c$ is differentiable then it is $\mu$-strongly convex if and only if $\nabla c$ is $\mu-$\emph{strongly monotone}.
		\item If $c$ is convex, then the proximal operator $\textup{prox}_c$ is firmly nonexpansive.
		\item If $C\subseteq\mathbb{R}^n$ is a convex set, then $\Pi_C$, is firmly nonexpansive.
	\end{enumerate}
\end{lemma}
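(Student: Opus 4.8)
The plan is to derive all three statements from a single principle: the subdifferential of a convex function is a monotone operator, and firm nonexpansiveness of a resolvent-type map is nothing but an algebraic rearrangement of that monotonicity.

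\emph{Part 1.} I would reduce strong convexity to plain convexity via the standard device $h := c - \tfrac{\mu}{2}\norm{\cdot}_2^2$: by definition, $c$ is $\mu$-strongly convex if and only if $h$ is convex. Since $c$, hence $h$, is differentiable, I invoke the classical first-order characterization that a differentiable function is convex if and only if its gradient is a monotone operator. Computing $\nabla h(x) = \nabla c(x) - \mu x$, monotonicity of $\nabla h$ reads $\big(\nabla c(x_1) - \nabla c(x_2) - \mu(x_1-x_2)\big)^\top(x_1-x_2) \ge 0$, which upon expanding is exactly $\mu$-strong monotonicity of $\nabla c$. If one prefers a self-contained argument for the harder direction (monotone gradient $\Rightarrow$ quadratic lower bound), I would integrate along the segment, $c(x_1)-c(x_2) = \int_0^1 \nabla c(x_2+t(x_1-x_2))^\top(x_1-x_2)\,dt$, and bound the integrand by applying the strong-monotonicity inequality to the points $x_2+t(x_1-x_2)$ and $x_2$.

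\emph{Parts 2 and 3.} For the proximal operator, fix $y_1,y_2$ and set $u_i = \textup{prox}_c(y_i)$; these are well-defined and unique because $u\mapsto c(u)+\tfrac12\norm{u-y_i}_2^2$ is continuous, strongly convex and coercive. Fermat's rule gives $0 \in \partial c(u_i) + (u_i - y_i)$, i.e.\ $y_i - u_i \in \partial c(u_i)$. Monotonicity of $\partial c$ (valid since $c$ is convex), applied to the pairs $(u_1, y_1-u_1)$ and $(u_2, y_2-u_2)$, yields $\big((y_1-u_1)-(y_2-u_2)\big)^\top(u_1-u_2) \ge 0$, which rearranges to $(y_1-y_2)^\top(u_1-u_2) \ge \norm{u_1-u_2}_2^2$, i.e.\ firm nonexpansiveness. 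For $\Pi_C$ I would argue directly so as to avoid extended-valued functions: $u_i = \Pi_C(y_i)$ is characterized by the variational inequality $(y_i-u_i)^\top(z-u_i) \le 0$ for all $z\in C$; taking $z=u_2$ in the first and $z=u_1$ in the second and adding produces the same inequality, hence the same conclusion. (Alternatively, one may observe $\Pi_C = \textup{prox}_{\iota_C}$ with $\iota_C$ the convex indicator of $C$ and appeal to Part 2.)

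\emph{Main obstacle.} The only non-cosmetic ingredient is the equivalence used in Part 1 between convexity and monotonicity of the gradient, and specifically the direction that a monotone gradient forces the strong-convexity lower bound; this is the one place where a genuine analytic estimate (the line integral above) rather than pure algebra is needed. One must also take care that $\textup{prox}_c$ and $\Pi_C$ are single-valued — which is precisely why convexity (and, for $\Pi_C$, closedness of $C$, implicit in the statement) is essential — after which the remainder is routine bookkeeping with the monotonicity inequality.
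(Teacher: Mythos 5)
The paper states this lemma without proof, deferring implicitly to the standard monotone-operator literature it cites (Bauschke--Combettes, Ryu--Boyd), so there is no in-paper argument to compare against; your proposal is the canonical proof one finds there. All three parts are correct: the reduction of strong convexity to convexity of $c-\tfrac{\mu}{2}\norm{\cdot}_2^2$ plus the gradient-monotonicity characterization handles Part 1 (and your line-integral argument for the converse direction is the right way to make it self-contained), and the Fermat-rule/variational-inequality derivation of $(y_1-y_2)^\top(u_1-u_2)\ge\norm{u_1-u_2}_2^2$ gives Parts 2 and 3 exactly in the form of firm nonexpansiveness as the paper defines it. Your two caveats are also the right ones to flag: since the paper takes $c:\mathbb{R}^n\to\mathbb{R}$ finite-valued, it is automatically continuous and minorized by an affine function, so $u\mapsto c(u)+\tfrac12\norm{u-y}_2^2$ is coercive and strongly convex and $\textup{prox}_c$ is single-valued; and Part 3 does require $C$ closed and nonempty for $\Pi_C$ to be well defined, a hypothesis the paper leaves implicit.
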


One key observation is that all of these properties are \textit{incremental}, that is, they relate the change in function values to changes in the argument. To emphasize this, we will use shorthand notation for incremental signals, by writing $\Delta x$ where $\Delta x_k := x^1_k-x^2_k$ for any two signals $x^1,x^2$ of the same system. 

Now, if we consider $u_k = \varphi(y_k)$, we will focus on \emph{quadratic} bounds of the increments of the form 
\begin{equation}\label{eq:sector bound}  s(\Delta y_k, \Delta u_k) \leq 0, \end{equation}
with
\begin{equation}\label{eq:quad supply}  s(\Delta y_k, \Delta u_k) := \begin{pmatrix} \Delta y_k \\ \Delta u_k\end{pmatrix}^\top S \begin{pmatrix} \Delta y_k \\ \Delta u_k\end{pmatrix}, \end{equation}
where $S=S^\top \in\mathbb{R}^{(p+m)\times (p+q)}$. As an example, we can see that $\varphi$ is $\mu$-strongly monotone if and only if: 
\begin{equation} \label{eq:mu str mon in quad form} \begin{pmatrix} \Delta y_k \\ \Delta u_k\end{pmatrix}^\top \begin{pmatrix} 2\mu I_q& -I_q \\ -I_q & 0 \end{pmatrix} \begin{pmatrix} \Delta y_k \\ \Delta u_k\end{pmatrix}\leq 0.
\end{equation}
Conditions similar to \eqref{eq:mu str mon in quad form}  can be written for firmly nonexpansive or $L$-Lipschitz functions. Moreover, while outside the scope of this paper, \emph{one-sided Lipschitz} and \emph{cocoercive} functions admit similar descriptions. For the following analyis, we will refer to the function $s$ as an \textit{(incremental) supply rate}.

\subsection{Incremental dissipativity and contraction} 
The assumptions on the oracles, as described above, pertain to static maps. For the analysis of optimization algorithms, which are dynamical systems, we will consider \textit{incremental dissipativity}, the key concept used for analysis in this paper.
\begin{subequations}
\begin{definition}
	A dynamical system with states $x\in\mathbb{R}^n$, inputs $u\in\mathbb{R}^m$, and outputs $y\in\mathbb{R}^p$ is \emph{incrementally dissipative} with respect to the \emph{supply rate} $s:\mathbb{R}^p\times\mathbb{R}^m\rightarrow \mathbb{R}$ if there exists a nonnegative \textit{storage function} $V:\mathbb{R}^n \rightarrow \mathbb{R}$, such that for any time $k$:

	\begin{equation}\label{eq:dissipation} V(\Delta x_{k+1}) - V(\Delta x_k) \leq s(\Delta y_k, \Delta u_k), \end{equation}
	for any incremental trajectory $(\Delta x,\Delta u,\Delta y)$ of the system. 
\end{definition}

For linear systems and quadratic supply rates, it is well-known (cf. \cite{JCW:72b}) that the system is dissipative if and only if it is dissipative with a quadratic storage function. By the same methods, the same follows for the incremental case. We can hence test linear systems for dissipativity using Linear Matrix Inequalities (LMI's) as follows. 
\begin{lemma}
	\label{lem:LMI}
	The linear system \eqref{eq:linear system} is incrementally dissipative with respect to the supply rate \eqref{eq:quad supply} and storage $V(\Delta x_k)=(\Delta x_k)^\top P (\Delta x_k)$ with $P\succeq 0$ if 
	\[ \begin{bmatrix} I & 0 \\ A & B\end{bmatrix}^{\!\top}\!\! \begin{bmatrix} P & 0 \\ 0 & \!\!-P \end{bmatrix}\! \begin{bmatrix} I & 0 \\ A & B\end{bmatrix} + \begin{bmatrix} C & D \\ 0 & I\end{bmatrix}^{\!\top}\!\! S \!\begin{bmatrix} C & D \\ 0 & I\end{bmatrix} \!\succeq 0.\] 
\end{lemma}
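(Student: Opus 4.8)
The plan is to collapse the dissipation inequality into a single quadratic form in the stacked vector $(\Delta x_k,\Delta u_k)$ and then read off positive semidefiniteness of a matrix. The enabling observation is linearity of \eqref{eq:linear system}: if $(x^1,u^1,y^1)$ and $(x^2,u^2,y^2)$ are two trajectories, then the increments satisfy $\Delta x_{k+1}=A\,\Delta x_k+B\,\Delta u_k$ and $\Delta y_k=C\,\Delta x_k+D\,\Delta u_k$, i.e.\ the very same dynamics. Hence an incremental trajectory is itself an ordinary trajectory; and by choosing $(x^2,u^2)\equiv 0$ and letting $(x^1_k,u^1_k)$ be arbitrary, every pair $(\xi,v)\in\mathbb{R}^n\times\mathbb{R}^m$ occurs as $(\Delta x_k,\Delta u_k)$ for some incremental trajectory at some time $k$.

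Next I would do the algebra. Writing $z=\begin{pmatrix}\Delta x_k\\\Delta u_k\end{pmatrix}$, the identity $\begin{bmatrix}I&0\\A&B\end{bmatrix}z=\begin{pmatrix}\Delta x_k\\\Delta x_{k+1}\end{pmatrix}$ yields $V(\Delta x_k)-V(\Delta x_{k+1})=z^\top\begin{bmatrix}I&0\\A&B\end{bmatrix}^{\!\top}\begin{bmatrix}P&0\\0&-P\end{bmatrix}\begin{bmatrix}I&0\\A&B\end{bmatrix}z$, while $\begin{bmatrix}C&D\\0&I\end{bmatrix}z=\begin{pmatrix}\Delta y_k\\\Delta u_k\end{pmatrix}$ turns the supply rate \eqref{eq:quad supply} into $s(\Delta y_k,\Delta u_k)=z^\top\begin{bmatrix}C&D\\0&I\end{bmatrix}^{\!\top}S\begin{bmatrix}C&D\\0&I\end{bmatrix}z$. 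Adding the two identities shows
\[ s(\Delta y_k,\Delta u_k)-\bigl(V(\Delta x_{k+1})-V(\Delta x_k)\bigr)=z^\top M z, \]
where $M$ is precisely the left-hand side of the LMI in the statement.

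The two implications then fall out. If $M\succeq 0$, the right-hand side above is nonnegative along any incremental trajectory, which is exactly the dissipation inequality, so \eqref{eq:linear system} is incrementally dissipative with storage $\Delta x^\top P\Delta x$ (which is nonnegative because $P\succ 0$). Conversely, if the dissipation inequality holds along every incremental trajectory, then $z^\top M z\geq 0$ for every $z\in\mathbb{R}^{n+m}$ by the surjectivity remark of the first paragraph, and since $M=M^\top$ this forces $M\succeq 0$.

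The only delicate point — the ``main obstacle,'' such as it is — is to make sure the quadratic form has to be checked over the \emph{entire} space $\mathbb{R}^{n+m}$ and not over some proper subset carved out by the dynamics; this is exactly what the zero-trajectory construction guarantees. The remaining work (the block matrix manipulations, and the sign bookkeeping between $V(\Delta x_{k+1})-V(\Delta x_k)$ and the $\diag(P,-P)$ block) is routine. Note also that positive definiteness of $P$ is not needed for the equivalence per se; it only ensures the candidate storage function $\Delta x\mapsto\Delta x^\top P\Delta x$ is a bona fide nonnegative storage.
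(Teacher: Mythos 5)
Your proof is correct, and it is the standard argument the paper implicitly relies on: the paper states this lemma without proof, introducing it as ``well known,'' so there is no written proof to diverge from. Your rewrite of the dissipation inequality as a single quadratic form in $(\Delta x_k,\Delta u_k)$, together with the observation that increments of a linear system obey the same dynamics and that every pair $(\xi,v)$ is achievable as $(\Delta x_k,\Delta u_k)$, is exactly the expected justification, and your closing remark that $P\succ 0$ is not needed for the equivalence itself is also accurate.
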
	

We will generally make a number of slightly more restrictive assumptions. First, we will focus on the situation where $P$ is positive definite\footnote{If the pair $(C,A)$ is observable and the matrix $S$ has at least $p$ negative eigenvalues, then there exists a positive semi-definite quadratic storage function if and only if there exists a positive definite one \cite[Lemma 4.4] {Azkathesis23}.}. For technical reasons we will often consider instead of \eqref{eq:dissipation}, more \textit{strict} incremental dissipation inequalities: 
\begin{align}   
	V(\Delta x_{k+1}) - V(\Delta x_k) &<s(\Delta y_k, \Delta u_k), \textup{ or } \label{eq:dissipation-strict} \\
	V(\Delta x_{k+1}) - \gamma V(\Delta x_k) &\leq s(\Delta y_k, \Delta u_k), \label{eq:dissipation-exp}
\end{align}
\end{subequations}
with $0<\gamma<1$. Lemma~\ref{lem:LMI} can be adapted to these problems in a routine manner.

As a last special case, we consider the case where $s(\Delta y_k, \Delta u_k)\leq 0$ for all increments $\Delta y_k$ and $\Delta u_k$. Then, we can employ the function $V(\Delta x_k)=(\Delta x_k)^\top P (\Delta x_k)$ with $P\succ 0$ as an \emph{incremental Lyapunov function} or \emph{contraction metric}. To avoid technicalities (cf. \cite{FB-CTDS}), if for any incremental trajectory $\Delta x$ of a system, we have
\begin{subequations}
	\begin{align}   
		V(\Delta x_{k+1}) - V(\Delta x_k) &\leq 0, \textup{ or }  \label{eq: nonexp} \\
		V(\Delta x_{k+1}) - V(\Delta x_k) &<0, \textup{ or } \label{eq: contr}  \\
		V(\Delta x_{k+1}) - \gamma V(\Delta x_k) &\leq 0 \textup{ with }  0<\gamma<1, \label{eq:stric contraction}
	\end{align}
\end{subequations}
for all $k\geq 0$, we say that the system is non-expansive, contracting, or exponentially contracting (sometimes referred to as \emph{linear convergence}) with rate $\gamma$ respectively. Clearly, (exponentially) contracting systems converge to a fixed point.

\section{Analysis of open optimization algorithms} 

We return our focus to the analysis of open optimization algorithms, that is, properties of the interconnection between \eqref{eq:linear system} and \eqref{eq:general oracles}. Our goal is to verify whether the interconnction is incrementally dissipative with respect to a supply rate

\[s_{\textup{ext}}(\Delta z_k, \Delta d_k) := \begin{pmatrix} \Delta z_k \\ \Delta d_k\end{pmatrix}^\top S_{\textup{ext}} \begin{pmatrix} \Delta z_k \\ \Delta d_k\end{pmatrix}.\] 

In order to test an open optimization algorithm for such properties, we use the following line of reasoning: 

\begin{enumerate}
	\item We write the optimization algorithm as the closed loop between a linear system \eqref{eq:linear system} and a set of oracles \eqref{eq:general oracles}.  
	\item\label{item:oracle} We assume (cf. Section~\ref{ssec:oracles}) that the oracle satisfies a quadratic incremental bound of the form
	\begin{equation}\label{eq:Sphi} s_\varphi(\Delta y,\Delta u) := \begin{pmatrix} \Delta y_k \\ \Delta u_k\end{pmatrix}^\top S_\varphi \begin{pmatrix} \Delta y_k \\ \Delta u_k\end{pmatrix}\leq 0.\end{equation} 
	\item\label{item:diss} We use Lemma~\ref{lem:LMI} to show whether there exists a $\lambda>0$ for which the linear system is incrementally dissipative with respect to the supply rate
	\[ \lambda s_\varphi+ s_{\textup{ext}}, \] 
	with some storage function induced by $P\succ 0$.
	\item If so, we combine \ref{item:oracle} and \ref{item:diss} and conclude that, for the open optimization algorithm, we have
	\begin{equation}\label{eq:diss open opt}  V(\Delta x_{k+1}) - V(\Delta x_k) \leq s_{\textup{ext}}(\Delta z_k, \Delta d_k) . \end{equation} 
\end{enumerate}

Of course, we can derive similar lines of reasoning for formulations along the line of \eqref{eq:dissipation-strict} or \eqref{eq:dissipation-exp}. 

Now that we can test for the incremental dissipativity of open optimization algorithms, we will show how this helps resolve the presented problems. 


\subsection{Analysis of closed loop algorithms} 
Consider, again, a discrete time plant of the form \eqref{eq:plant}. Recall that we aim at replacing the controller \eqref{eq:opt cont} with a dynamical system. When replacing the controller with an algorithm, we obtain a autonomous closed loop consisting of the nonlinear system \eqref{eq:plant}, the linear part of the algorithm \eqref{eq:linear system}, the set of oracles \eqref{eq:general oracles}, and the interconnection equalities $\zeta_k =d_k$ and $\nu_k= z_k$.

In line with the previous, we will take an approach based on compositional properties of incremental dissipativity. Hence, we assume that the plant is incrementally dissipative. In the linear case this could be verified using a LMIs, and for more general systems derived from first principles. 

%

\begin{figure*}[!bh]
	\hrulefill 
	\begin{equation}\label{eq:big LMI}
		\begin{bmatrix} I & A^\top  \\  0 & B_1^\top \\ 0 & B_2^\top \end{bmatrix}\begin{bmatrix} \gamma P & 0 \\ 0 & -P \end{bmatrix}\begin{bmatrix} I & A^\top  \\  0 & B_1^\top \\ 0 & B_2^\top \end{bmatrix}^\top + \begin{bmatrix} C_1 & D_{11} & D_{12} \\ 0 & I & 0 \\ C_2 & D_{21} & D_{22} \\ 0 &0 & I \end{bmatrix}^{\!\top} 
		\begin{bmatrix} \lambda S_\varphi  & 0 &0 \\ 0 & - I & 0 \\  0 & 0 & \mu^2 I  \end{bmatrix} 
		\begin{bmatrix} C_1 & D_{11} & D_{12} \\ 0 & I & 0 \\ C_2 & D_{21} & D_{22} \\ 0 &0 & I \end{bmatrix}  \succeq 0. 
	\end{equation}
\end{figure*}

\begin{lemma}\label{thm: diss into cl} 
	Suppose that there exists a quadratic supply rate $s_{\textup{p}}$, and 
	\begin{itemize}
		\item a positive definite $V_{\textup{p}}$ and scalar $0<\gamma_{\textup{p}}<1$ such that 
		\[ V_{\textup{p}}(\Delta \xi_{k+1}) - \gamma_{\textup{p}} V_{\textup{p}}(\Delta \xi_k) \leq s_{\textup{p}} (\Delta \zeta_k, \Delta \nu_k) \] 
		for all increments of trajectories of \eqref{eq:plant}; and
		\item a positive definite $V_{\textup{o}}$ and scalar $0<\gamma_{\textup{o}}<1$ such that 
		\[  V_{\textup{o}}(\Delta x_{k+1}) - \gamma_{\textup{o}} V_{\textup{o}}(\Delta x_k) \leq - s_{\textup{p}}(\Delta d_k, \Delta z_k), \]
		for all increments of trajectories of the interconnection of \eqref{eq:linear system} and \eqref{eq:general oracles}.
	\end{itemize}
	
	Then the system consisting of \eqref{eq:linear system}, \eqref{eq:general oracles}, \eqref{eq:plant}, $\zeta_k =d_k$, and $\nu_k= z_k$ is exponentially contractive with rate $\max \{\gamma_{\textup{p}},\gamma_{\textup{o}}\}$. 
\end{lemma}
The result follows by using the sum of the storage functions $V_p(\Delta \xi)+V_o(\Delta x)$ as a contraction metric.  

It should be noted that we do not assume either the plant or the optimization algorithm to be \textit{open loop contractive}, that is, contractive with inputs ($d_k$ or $\nu_k$) equal to zero. In fact, if we interpret the storage functions as a measure of \textit{energy}, we allow for exchange of energy between the plant and the algorithm.

\subsection{Robustness against noise}

Consider the interconnection between \eqref{eq:linear system} and \eqref{eq:general oracles} and assume that the signal $d$ is assumed to be a (bounded) disturbance as in Example~\ref{ex:open nesterov 1}. We make the following assumption on the \textit{nominal behavior} of the algorithm.

\begin{assumption}\label{asmp:nominal}
	For a fixed $d_k=0$, the system comprised of \eqref{eq:linear system} and \eqref{eq:general oracles} has a unique stationary point that corresponds to a solution $z^\star$ of the corresponding optimization problem. 
\end{assumption}

Our goal is to quantify how the disturbance $d_k$ affects the resulting output trajectory. Consider the $\ell^2$ norm of the signal $d$, given by $\sum_{k=0}^\infty\norm{{d}_k}_2$. We will consider an incremental form of the $\ell^2$ system gain, for which the following supply rate, denoted $s_\ell$, will play a role:
\[ s_\ell(\Delta z_k, \Delta d_k) = \begin{pmatrix} \Delta z_k \\ \Delta d_k\end{pmatrix}^{\!\top}\!\! \begin{pmatrix} -I & 0 \\ 0 &\mu^2 I \end{pmatrix} \!\!\begin{pmatrix} \Delta z_k \\ \Delta d_k\end{pmatrix}. \]
A system incrementally dissipative with respect to this supply rate is said to have incremental $\ell^2$ gain less than $\mu$. 

%

In line with Lemma~\ref{lem:LMI}, we can test whether the interconnection between \eqref{eq:linear system} and \eqref{eq:general oracles} has incremental $\ell^2$ gain less than $\mu$. The following theorem gives a handle on the repercussions of this for bounding the effect of disturbances on the output of the system.
\begin{theorem}\label{cor:ell2}
	Consider the interconnection between \eqref{eq:linear system} and \eqref{eq:general oracles}. Assume that the oracles satisfy a bound of the form \eqref{eq:Sphi}. Suppose that the inequality \eqref{eq:big LMI} holds for $P\succ 0$ and $0<\gamma<1$, then:
	\begin{itemize}
		\item The system has incremental $\ell^2$ gain less than $\mu$.
		\item If $d_k =0$, the system is exponentially contractive with rate $\gamma$, towards a fixed point $x^\star$ with output $z^\star$.
		\item Let $z_k$ denote an output trajectory of the closed loop initialized at $x_0$ corresponding to the signal $d_k$. Then for any sequence $d_k$ for which $\sum_{k=0}^\infty\norm{{d}_k}_2$ is finite, we have:
		\[ \sum_{k=0}^\infty \norm{z^\star-z(x_0;d_k)}^2_2 \leq V(x^\star-x_0)+ \mu^2 \sum_{k=0}^\infty \norm{{d}_k}^2_2. \] 
	\end{itemize}
\end{theorem}

In simple terms, under these conditions the \textit{nominal} system is exponentially contractive. Under Assumption~\ref{asmp:nominal}, this has to be towards the solution of the optimization problem. Moreover, the second part of Theorem~\ref{cor:ell2} means that any other output trajectory is bounded by the sum of an initial error term and $\mu^2$ times the $\ell^2$ norm of the disturbance. 

The above result verifies a given nominal contraction rate and bound on the incremental $\ell^2$ gain. Recall that we are interested in the robustness trade-off. That is, the trade-off between the nominal contraction rate and the incremental $\ell^2$ gain of the system. For this, we can make two observations: 
\begin{itemize} 
	\item The scalar $\mu^2$ appears linearly in \eqref{eq:big LMI}. Therefore, given $0<\gamma<1$, minimizing for $\mu>0$ can be reframed as a semi-definite program (SDP) and solved efficiently. 
	\item The scalar $\gamma$ appears bilinearly, multiplied with $P$. Minimizing for $\gamma$ for which the LMI is feasible can be done using bisection. 
\end{itemize} 

Further trade-offs appear when also considering the \textit{overshoot} of the algorithm. Let $x^*$ be a stationary point of the dynamics, assume that and the dynamics are exponentially contractive with contraction metric  $V(\Delta x_k)=(\Delta x_k)^\top P (\Delta x_k)$ with $P\succ 0$ and rate $\gamma$. Let $\lambda_-(P)$ and $\lambda_+(P)$ denote the smallest and largest eigenvalue of $P$ respectively, then 
\[ \norm{x_k-x^*}^2_2 \leq \gamma^k\frac{\lambda_+(P)}{\lambda_-(P)} \norm{x_0-x^*}^2_2.\]
for any trajectory $x_k$ of the algorithm. Hence, we can quantify the maximal overshoot in terms of the eigenvalues of $P$. 

\begin{remark}\label{rem:smallgain}
	A special case of Lemma~\ref{thm: diss into cl} arises if the plant has incremental $\ell^2$ gain less than $\mu_{\textup{p}}$, and the optimization algorithm has incremental $\ell^2$ gain less than $\mu_{\textup{o}}$. Then, if $\mu_{\textup{p}}\mu_{\textup{o}}\leq1$ the closed loop system is non-expansive. 
\end{remark}

\section{Numerical example}
In this section we consider open optimization algorithms with \eqref{eq:linear system} given by
\begin{align*}
& A = \begin{bmatrix}1+\beta &-\beta \\ 1 & 0\end{bmatrix}, & \quad & B_1=B_2= \begin{bmatrix} -\alpha \\ 0 \end{bmatrix}, \\
& C_1=C_2= \begin{bmatrix}1+\eta &-\eta \end{bmatrix}, & \quad & D= 0, \end{align*} 
parametrized by scalars $\alpha,\beta,\eta$. For the nominal system, that is, if $d_k=0$, it was shown in \cite{BvS-LL:21} that this fully characterizes the set of scalar optimization algorithms with state dimension equal to $2$ and an observable eigenvalue equal to $1$. Moreover, there are a number of common choices as special cases, see Table~\ref{table:nominal-algos}.

\begin{table*}[ht]
	\centering
	\begin{tabular}[t]{lccccc}
		\toprule
		Nominal algorithm name & $\alpha$ & $\beta$ & $\eta$ & minimal $\rho$ & minimal $\mu$ \\
		\midrule
		Gradient Descent (GD) & $\frac{2}{L+m}$ & 0 & 0 & 1.0019 & 1.0039  \\
		Gradient Descent (GDL) & $\frac{1}{L}$ & 0 & 0 &  0.3245 & 1.0004  \\
		Gradient Descent (GD10L) & $\frac{1}{10L}$ & 0 & 0 & 0.0176 &  1.0004 \\
		Heavy Ball (HB) \cite{Polyak-book} & $\frac{4}{(\sqrt{L}+\sqrt{m})^2}$ &  $\left(\frac{\sqrt{L}-\sqrt{m}}{\sqrt{L}+\sqrt{m}}\right)^2$ & 0 & 1.0020 & \\
		Fast Gradient (FG) \cite{Nesterov-book} & $\frac{1}{L}$ & $\frac{\sqrt{L}-\sqrt{m}}{\sqrt{L}+\sqrt{m}}$ & $\frac{\sqrt{L}-\sqrt{m}}{\sqrt{L}+\sqrt{m}}$ &0.5880 &  1.7679\\
		Triple Momentum (TM) \cite{BvS-2017} & $\frac{2\sqrt{L}-\sqrt{m}}{L^{3/2}}$ & $\frac{(\sqrt{L}-\sqrt{m})^2}{L+\sqrt{mL}}$ & $\frac{(\sqrt{L}-\sqrt{m})^2}{2L-m+\sqrt{mL}}$ & 0.9892 &    5.6812 \\
		Robust Momentum (RM) \cite{BvS-LL:21} & $\frac{(1-r_i)^2(1+r_i)}{m}$ & $\frac{Lr_i^3}{L-m}$ & $\frac{mr_i^3}{(L-m)(1-r_i)^2(1+r_i)}$ & 0.4727 & 1.2039 \\
		\bottomrule
	\end{tabular}
		\caption{List of different parameter choices, for which the nominal algorithms are recommended/standard tunings of well-known algorithms (cf. \cite{BvS-LL:21}). For RM, we consider on the parameter $r$ in the middle of the interval of feasible values $[1-\sqrt{\frac{m}{L}}, 1-\frac{m}{L}]$. The last two columns are the minimal numerical value of $\rho$ (cf. \ref{eq:strict-inc-passive}) and upper bound for the incremental $\ell^2$ gain $\mu$ (Theorem~\ref{cor:ell2}) with $m=1,L=6$. Note that the latter was infeasible for the Heavy ball method.} 
	\label{table:nominal-algos}
\end{table*}

We will focus on optimizing a $\mu$-strongly convex function $c$ with $L$-Lipschitz gradients. With some over-approximation, we assume that we have access to an oracle $u_k = \varphi(y_k) := \nabla c(y_k)$, for which
\[ s(\Delta y_k, \Delta u_k) = \begin{pmatrix} \Delta y_k \\ \Delta u_k\end{pmatrix}^\top \begin{bmatrix} mL & -\tfrac{m+L}{2} \\ -\tfrac{m+L}{2} & 1 \end{bmatrix} \begin{pmatrix} \Delta y_k \\ \Delta u_k\end{pmatrix} \leq 0. \] 
For the simulations, we will consider $m=1$ and $L=6$. We will investigate the \textit{strict incremental passivity index} for the open optimization algorithms, that is, we are interested in the minimal value of $\rho$ for which the interconnection of \eqref{eq:linear system} with the oracle $u_k = \varphi(y_k)$ is dissipative with respect to the incremental supply rate:
\begin{equation}\label{eq:strict-inc-passive}  s_\textup{ext}(\Delta z_k, \Delta d_k) = \begin{pmatrix} \Delta z_k \\ \Delta d_k\end{pmatrix}^\top \begin{bmatrix} \rho & -1 \\ -1 & \rho \end{bmatrix} \begin{pmatrix} \Delta z_k \\ \Delta d_k\end{pmatrix} \end{equation} 

In Table~\ref{table:nominal-algos} we present the resulting numerical values of $\rho$ obtained by solving an SDP using YALMIP \cite{JL:04} and MOSEK. Additonally, Table~\ref{table:nominal-algos} shows upper bounds on the incremental $\ell^2$ gain found using Theorem~\ref{cor:ell2}.

\begin{figure}
	\includegraphics[width=0.5\textwidth]{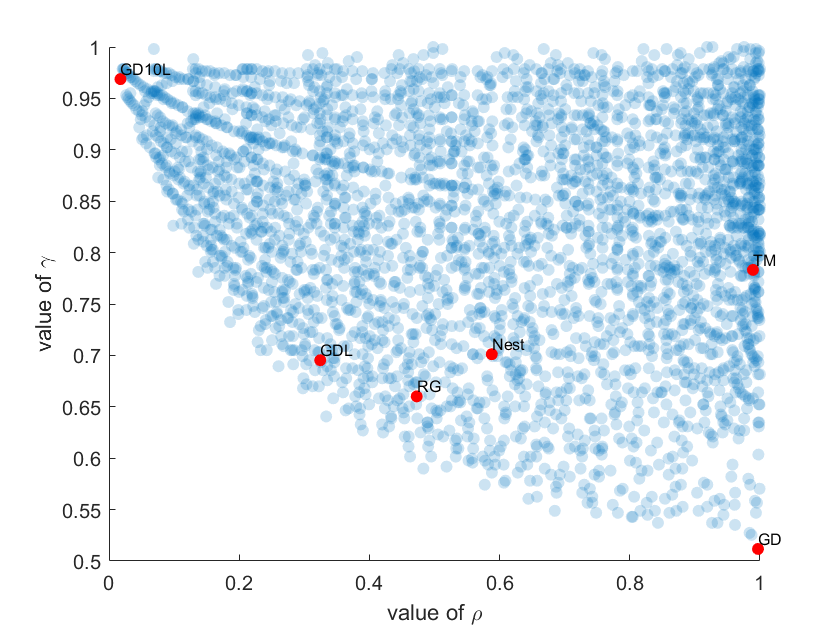}
	\caption{The minimal open-loop contraction rate $\gamma$ plotted against the minimal strict incremental passivity index $\rho$ (cf. \eqref{eq:strict-inc-passive}) for a sweep of 18000 sets of parameters $\alpha,\beta,\eta$. Also shown are a number of special cases labeled as in Table~\ref{table:nominal-algos}.} \label{fig:sweep}
\end{figure}

\begin{figure}
	\includegraphics[width=0.5\textwidth]{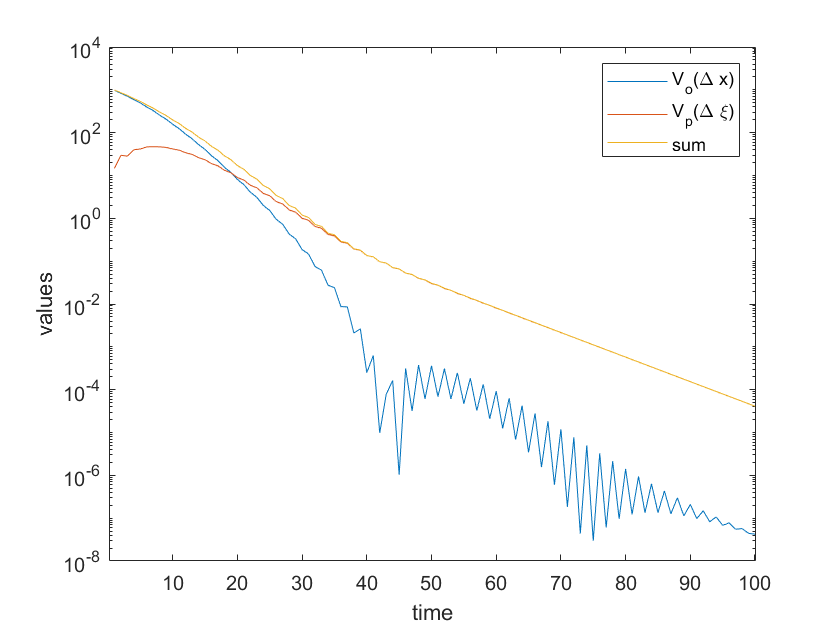}
	\caption{The values of the incremental storage functions of the plant $V_{\textup{p}}$, of the algorithm $V_{\textup{o}}$, and their sum for a randomly initiated pair of trajectories. For the simulation we chose the oracle equal to $u_k = \phi(y_k) = y_k+5\tanh(y_k)+2$. Note that the 'energy' in the plant or the algorithm does not decrease uniformly, that is, there is energy exchange between the plant and the algorithm.} \label{fig:contr}
\end{figure}

In Figure~\ref{fig:sweep} we present the results of a sweep of 18000 sets of parameters $\alpha,\beta,\eta$. Note that there is Pareto front between the contaction rate and the passivity indices, and this is not attained by a standard method.

\subsection*{Closed-loop optimization} 
Now consider the linear plant given by 
\[ \xi_{k+1}= \begin{bmatrix} -0.9 & 0.3 \\ 0.2 & 0.8\end{bmatrix} \xi_k + \begin{bmatrix} 0 \\ 1 \end{bmatrix} v_k  \quad \zeta_k = \begin{bmatrix} 1 & 0 \end{bmatrix} \xi_k + v_k.  \] 
We can verify numerically that the plant has incremental $\ell^2$ gain smaller than $\mu= 3.5001
$. Suppose that we want to use a small gain argument (cf. Remark~\ref{rem:smallgain}) to obtain a nonexpansive closed loop. This requires an algorithm with incremental $\ell^2$ gain smaller than $\tfrac{1}{\mu}$, which  Table~\ref{table:nominal-algos} contains no examples of. 

However, the plant is also (strictly) dissipative with respect to 
\[ s_\textup{p}(\Delta \zeta_k, \Delta \nu_k) = \begin{pmatrix} \Delta \zeta_k \\ \Delta \nu_k\end{pmatrix}^\top \begin{bmatrix} \delta & 1 \\ 1 & \delta \end{bmatrix} \begin{pmatrix} \Delta \zeta_k \\ \Delta \nu_k\end{pmatrix}, \] 
with \textit{antipassivity index} $\rho= -0.5282$. Using Lemma~\ref{thm: diss into cl}, we see that the closed loop will be contractive with GDL, GD10L, and RM. It can be verified that this is indeed the case, and in fact, the system can be shown to not be contractive when interconnected with any of the other choices. In Figure~\ref{fig:contr}, we show the resulting values of the storage functions for the choice of GD10L.

\section{Conclusions}
We considered the analysis of \textit{open optimization algorithms}, in order to analyze robustness and closed loop performance of optimization algorithms in closed loop with dynamical systems. Towards this, we first formalized the use of incremental dissipativity for the analysis of such optimization algorithms, and its links to monotone operator theory and contraction theory. Then, we derived analysis results for optimization-in-the-loop, where we consider the algorithm in interconnection with a plant. Lastly, we provided tests for an open optimization algorithm to have an incremental finite small gain property, and shown its relevance for robustness analysis. These tests take the form of linear matrix inequalities, and can therefore be checked efficiently.

\bibliography{references}

\bibliographystyle{IEEEtran}

\end{document}